\numberwithin{equation}{section}
\newtheorem{theorem}{Theorem}[section]
\newtheorem{lemma}[theorem]{Lemma}
\renewenvironment{proof}[1][Proof]{\begin{trivlist}
\item[\hskip \labelsep {\bfseries #1}]}{\qed\end{trivlist}}
\newcommand{\bP}{{\ensuremath{\mathbf P}} }
\newcommand{\bE}{{\ensuremath{\mathbf E}} }
\newcommand{\mA}{{\ensuremath{\mathcal A}} }
\newcommand{\ga}{\alpha}
\newcommand{\gep}{\epsilon}       
\newcommand{\gp}{\varphi}
\newcommand{\gl}{\lambda}
\newcommand{\ind}{\mathbf{1}}
\newcommand{\Z}{\mathbb{Z}}
\renewcommand{\tilde}{\widetilde}
\renewcommand{\hat}{\widehat}
\newcommand{\simn}{\stackrel{n\to\infty}{\sim}}
\title[Renewal theory with no moments]{Local limit theorems and renewal theory with no moments}
\author[K. Alexander]{Kenneth S. Alexander}
\address{Department of Mathematics, KAP 108\\
University of Southern California\\
Los Angeles, CA  90089-2532 USA}
\author[Q. Berger]{Quentin Berger}
\address{LPMA, Universit\'e Pierre et Marie Curie\\
Campus Jussieu, case 188\\
4 place Jussieu, 75252 Paris Cedex 5, France}
\email{quentin.berger@upmc.fr}
\begin{document}

\begin{abstract}
We study i.i.d.~sums $\tau_k$ of nonnegative variables with index $0$: this means $\bP(\tau_1=n) = \gp(n) n^{-1}$, with $\gp(\cdot)$ slowly varying, so that $\bE(\tau_1^\gep)=\infty$ for all $\gep>0$.
We prove a local limit and  local (upward) large deviation theorem, giving the asymptotics of $\bP(\tau_k=n)$ when $n$ is at least the typical length of $\tau_k$.
A recent renewal theorem in \cite{cf:Nagaev} is an immediate consequence:
$\bP(n\in\tau) \sim \bP(\tau_1=n)/\bP(\tau_1 > n)^2$ as $n\to\infty$.
If instead we only assume regular variation of $\bP(n\in\tau)$ and slow variation of $U_n:= \sum_{k=0}^n \bP(k\in\tau)$, we obtain a similar equivalence but with $\bP(\tau_1=n)$ replaced by its average over a short interval.  We give an application to the local asymptotics of the distribution of the first intersection of two independent renewals.
We further derive downward moderate and large deviations estimates, that is, the asymptotics of $\bP(\tau_k \leq n)$ when $n$ is much smaller than the typical length of $\tau_k$.
\end{abstract}

\maketitle

\section{Introduction}

It is classical to study renewal processes $\tau = \{0=\tau_0,\tau_1,\tau_2,\dots\}$, 
and in particular the relation between the renewal mass function $\bP(n\in\tau)$ and the inter-arrival distribution $\bP(\tau_1=n)$. We assume the inter-arrival distribution $\bP(\tau_1=n)$ is regularly varying: there exists a positive slowly varying function $\gp(\cdot)$ and $\ga\geq 0$ such that
\begin{equation}
\label{eq:alpha}
 \bP(\tau_1=n) = \varphi(n)\, n^{-(1+\ga)}\, .
\end{equation}
In particular the process is aperiodic. 
The case receiving the least attention (under the general assumption \eqref{eq:alpha}) is $\ga=0$, in which $\tau_1$ has no moments and is not in the domain of attraction of a stable law, and that is our focus here. Tauberian theorems are of less use here than in other cases, so our methods are primarily probabilistic.  An example with $\alpha=0$ is the return times of symmetric simple random walk (SSRW) on $\Z^2$, $\tau=\{n\, , \, S_{2n}=0\}$, for which $\bP(\tau_1=n) \simn \pi/n(\log n)^2$, from \cite[Thm.~4]{cf:JP}.

\smallskip
The limiting distributions of $\tau_n$ and related quantities in the $\ga=0$ case have been studied in \cite{cf:Dar,cf:Gne,cf:Kas,cf:Nagaev,cf:NW,cf:Tei,cf:Wat}.
Defining $r(n):=\bP(\tau_1>n)$, Theorem~4.1 in \cite{cf:Dar} states that if $r(n)$ is slowly varying, then for any $y>0$
\begin{equation}
\label{Darling}
\bP\big( n\, r(\tau_n) <y \big) \to 1-e^{-y} \qquad \text{as } n\to+\infty .
\end{equation}
Recently in \cite{cf:Nagaev}, Nagaev proved a strong renewal theorem:
\begin{equation}\label{strongren}
  \bP(n\in\tau) \simn \frac{\bP(\tau_1=n)}{\bP(\tau_1 > n)^2},
\end{equation}
and for $\bP(\tau_k>n)$, some ``upward'' large deviation results (meaning for $n$ much larger than the typical size of $\tau_k$) were proved in \cite{cf:NW}.

\subsection{Renewal theorems}
\label{sec:renewals} The assumption \eqref{eq:alpha} is very natural: beyond the dimension-2 case, it includes the case $\tau=\{n\, , \, S_{2n}=0\}$, where $(S_n)_{n\geq 0}$ is SSRW on $\Z^d$ for any $d$. One has $\ga=1/2$ and $\gp(n) \stackrel{n\to\infty}{\to} (4\pi)^{-1/2}$ for $d=1$ (see e.g.\ \cite[Ch.~III]{cf:Feller}); and $\ga=\frac{d}{2}-1$, $\gp(n) \stackrel{n\to\infty}{\to} c_d$ for $d\geq 3$ (see \cite[Thm. 4]{cf:DK}).
Equation \eqref{eq:alpha} also includes the case $\tau=\{n\, , S_n=0\}$ where $(S_n)_{n\geq 0}$ is an aperiodic random walk in the domain of attraction of a symmetric stable law, see \cite[Thm. 8]{cf:Kesten}.

The asymptotics of the renewal function $\bP(n\in\tau)$ under \eqref{eq:alpha} have been widely studied in the literature, including \cite{cf:Doney}, \cite{cf:Eric}, \cite{cf:GL}, \cite{cf:Nagaev}, \cite{cf:Will}.
We recall briefly the results.

\smallskip
First, when $\tau$ is transient and \eqref{eq:alpha} holds, we have
\begin{equation}
\label{eq:transient}
\bP(n\in\tau) \simn \frac{\bP(\tau_1=n)}{\bP(\tau_1=+\infty)^2} \, .
\end{equation}
This is a consequence of Theorem~1 in \cite{cf:CNW}, and is also proved in \cite[App. A.5]{cf:Giac} with elementary methods.

\smallskip
If $\tau$ is recurrent, then
\begin{itemize}
\item if $\bE[\tau_1]<+\infty$, then the classical Renewal Theorem (see e.g.~\cite{cf:Asm}) gives that
\begin{equation}
\label{eq:renewthm}
\lim_{n\to\infty} \bP(n\in\tau) = \frac{1}{\bE[\tau_1]} \, ;
\end{equation}

\item if $\ga=1$ in \eqref{eq:alpha}, and $\bE[\tau_1]=+\infty$, Erickson \cite[Eq.~(2.4)]{cf:Eric} proved that
\begin{equation}
\label{eq:Eric}
\bP(n\in\tau) \simn  \frac{1}{\bE\left[ \tau_1\wedge n \right]} \, ;
\end{equation}

\item if $\ga\in (0,1)$ in \eqref{eq:alpha}, Doney \cite[Thm. B]{cf:Doney} proved that
\begin{equation}
\label{eq:Doney}
\bP(n\in\tau) \simn  \frac{\ga \sin(\pi \ga)}{\pi} \, n^{-(1-\ga)}\, \gp(n)^{-1} \, ;
\end{equation}

\item if $\ga=0$ in \eqref{eq:alpha}, then Nagaev \cite{cf:Nagaev} showed
\begin{equation}
\label{eq:Nagaev}
\bP(n\in\tau) \simn \frac{\bP(\tau_1=n)}{\bP(\tau_1 > n)^2} \, .
\end{equation}
\end{itemize}

\medskip
The condition \eqref{eq:alpha} is not best possible for the validity of these strong renewal theorems with infinite mean. 
Assume simply that $\bP(\tau_1>n) \simn \ga^{-1} \gp(n) n^{-\ga}$ with $\ga\in(0,1]$ (and $\bE[\tau_1]=+\infty$ if $\ga=1$), so that $\tau_1$ is in the domain of attraction of a stable law with index $\ga$.
Garsia and Lamperti \cite{cf:GL} showed that \eqref{eq:Doney} holds whenever $\ga\in(\tfrac12,1)$, and Erickson proved \eqref{eq:Eric} in the case $\ga=1$.
When $\ga\in(0,\tfrac12]$, some additional conditions on the distribution of $\tau_1$ are necessary for \eqref{eq:Doney} to be valid, and sufficient ones were given in \cite{cf:Chi}, \cite{cf:Chi2}, \cite{cf:Doney}, \cite{cf:Will}. 
It is only recently that a complete necessary and sufficient condition for the strong renewal theorem \eqref{eq:Doney} was proved in simultaneous papers by Caravenna \cite{cf:C15} and Doney \cite{cf:D15}.
A necessary and sufficient condition remains to be found in the case $\ga=0$.
\smallskip

Throughout the paper, $c_1,c_2,\dots$ are constants depending only on the distribution of $\tau_1$. 
Also, we treat certain large quantities at times as if they were integers, to avoid the clutter of integer-part notation; in all cases these can be treated as if the integer-part notation were in use.

\medskip
Our first result is a local limit and local (upward) large deviation theorem, proved in Section \ref{sec:thm1}, in the case of a recurrent $\tau$. 
Define $r_n:=r(n):=\bP(\tau_1> n)$, which in the $\ga=0$ case is slowly varying and satisfies (see  \cite[Proposition 1.5.9a]{cf:BGT})
\begin{equation}\label{phinrn}
 \gp(n)= o(r_n) \quad \text{as } n\to\infty.
\end{equation}
In particular we have $\gp(n)\to 0$.

In \cite{cf:NW}, it is proved that $\bP(\tau_k > n) \sim k r_n $ as $n,k\to\infty$ with $kr_n\to 0$.
We improve here  this result by establishing a local limit theorem, and extending the range of validity to $k\gp(n)\to 0$. This extension is significant because $kr_n\to 0$ allows only values of $n$ much larger than the typical value of $\tau_k$; see the remarks following the theorem.

\begin{theorem}
\label{prop:locallimit}
If $\tau$ is recurrent and \eqref{eq:alpha} holds with $\ga=0$, 
then uniformly for $k$ such that $k \varphi(n) \to 0$, we have 
\begin{equation}
\label{locallimit}
  \bP(\tau_k = n) \simn k\bP(\tau_1=n) (1-r_n )^k .
\end{equation}
Further, there exists a constant $c_1>0$ such that for $n$ sufficiently large and all $1\leq k\leq n$,
\begin{equation}
\label{uniformbound}
  \bP(\tau_k = n) \leq c_1 k\bP(\tau_1=n) (1-r_n )^k .
\end{equation}
\end{theorem}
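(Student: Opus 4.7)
The proof follows the ``one big jump'' heuristic: the event $\{\tau_k=n\}$ forces $\max_i X_i\le n$ (where $X_i:=\tau_i-\tau_{i-1}$), and is dominated by configurations in which a single $X_j$ is close to $n$ while the remaining $X_i$'s sum to $o(n)$. By exchangeability,
\[
\bP(\tau_k=n)\le k\bP(X_1\ge X_i,\ i\ge 2,\ \tau_k=n)=k\sum_{m\ge 1}\bP(\tau_1=m)\,\bP\bigl(\tau_{k-1}=n-m,\ \max_{i\le k-1}X_i\le m\bigr),
\]
with a matching lower bound of the same form (with $\le m-1$ in place of $\le m$) obtained by replacing $\ge$ by the strict inequality $>$; the gap is a ``tie'' correction, treated separately. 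The target factor $(1-r_n)^k$ in $k\bP(\tau_1=n)(1-r_n)^k$ ultimately reflects the probability that all $X_i\le n$.

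For the main term I pick a truncation $M_0=n/g_n$ with $g_n\to\infty$ slowly enough (a sub-power of $1/(k\gp(n))$) that slow variation gives $\gp(M_0)\sim\gp(n)$, $k(r_{M_0}-r_n)\to 0$, and $k\gp(M_0)\to 0$. The range $m\in(n-M_0,n]$ provides the main contribution: $\bP(\tau_1=m)\sim\bP(\tau_1=n)$ by slow variation, and the constraint $\max X_i\le m$ is automatic since $\tau_{k-1}=n-m\le M_0\le m$. This yields $k\bP(\tau_1=n)\,\bP(\tau_{k-1}\le M_0)(1+o(1))$, and I would verify
\[
\bP(\tau_{k-1}\le M_0)\ \sim\ (1-r_{M_0})^{k-1}\ \sim\ (1-r_n)^{k-1}\ \sim\ (1-r_n)^k.
\]
The first equivalence uses $\{\tau_{k-1}\le M_0\}\subseteq\{\max X_i\le M_0\}$ for the upper bound $\bP(\tau_{k-1}\le M_0)\le(1-r_{M_0})^{k-1}$, together with $\bP(\tau_{k-1}\le M_0\mid\max X_i\le M_0)\to 1$ by Chebyshev (the conditional mean is $\sim kM_0\gp(M_0)=o(M_0)$ by Karamata's theorem and $k\gp(M_0)\to 0$). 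The second equivalence uses $k(r_{M_0}-r_n)\to 0$; the third uses $r_n\to 0$.

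It remains to show that the other ranges of $m$ contribute $o(k\bP(\tau_1=n)(1-r_n)^k)$. The range $m<n/k$ gives zero contribution since $\max X_i\le m$ forces $\tau_{k-1}\le(k-1)m<n-m$. On the intermediate range $m\in[n/k,\,n-M_0]$ one bounds $\bP(\tau_{k-1}=n-m,\max X_i\le m)$ by large deviation estimates for truncated sums (Bennett/Bernstein-type applied to variables capped at $m$, combined with Potter bounds on $\gp$): the mean $\sim km\gp(m)$ is much smaller than $n-m$ under $k\gp(n)\to 0$, giving sufficient decay to make the sum over this range negligible. The ``tie'' correction---the difference between upper and lower bounds---is bounded by $k^2\sum_m\bP(\tau_1=m)^2\bP(\tau_{k-2}=n-2m,\max X_i\le m)$, dominated by $m\approx n/2$ which corresponds to a downward moderate deviation of $\tau_{k-2}$ and is of smaller order.

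For the uniform bound \eqref{uniformbound}, the asymptotic Chebyshev step is replaced by the uniform inequality $\bP(\tau_{k-1}\le M_0)\le(1-r_{M_0})^{k-1}\le(1-r_n)^{k-1}$, and the intermediate-range and tie estimates must be made uniform in $(k,n)$. The main obstacle throughout is the intermediate range: the heavy tail persists up to the truncation cap $m$, and sharp large deviation estimates for truncated sums, uniform over $m\in[n/k,n-M_0]$ and over $1\le k\le n$, constitute the technical heart of the proof.
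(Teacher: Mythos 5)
Your sketch follows the same one-big-jump strategy as the paper, and for the local limit theorem \eqref{locallimit} it would work: you decompose by which gap is maximal, extract the main term from $m$ near $n$, use Chebyshev to show $\bP(\tau_{k-1}\le M_0)\sim(1-r_n)^k$, and control the intermediate range with a Bennett/Fuk--Nagaev-type bound on truncated sums (the paper's Lemma~\ref{lem:nottoolarge} is exactly such a bound, and it is this sharp form, not Bernstein, that is needed, since for $m\approx n/2$ Bernstein alone only gives an $O(1)$ probability rather than the $O(k\gp(n))$ that makes the small-max contribution negligible). The paper conditions on the \emph{event} $M_k>(1-\gep)n$, $M_k\le n/2$, or in between, writing an exact sum in \eqref{bigsum2}; this avoids the tie correction your decomposition-by-argmax introduces, but these differences are cosmetic.

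Where your proposal genuinely falls short is the uniform bound \eqref{uniformbound}. The step ``replace Chebyshev by $\bP(\tau_{k-1}\le M_0)\le(1-r_{M_0})^{k-1}\le(1-r_n)^{k-1}$'' does not address the real difficulty: when $k\gtrsim 1/\gp(n)$, the factor $(1-r_n)^k$ is exponentially small and one must explain why contributions with $M_k\ll n$ do not swamp it. The paper's Cases 1--4 handle this by a dyadic decomposition $M_k\in(2^{\ell-1},2^\ell]$, crucially exploiting the ratio $(1-r_{2^\ell})^k/(1-r_n)^k\approx e^{-k(r_{2^\ell}-r_n)}\approx e^{-ck\gp(2^\ell)}$, which is exactly what compensates the loss coming from smaller values of $M_k$. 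Without this mechanism (your sketch never introduces the $(1-r_{M_k})^k$ factor), the intermediate-range estimate cannot be made uniform over all $1\le k\le n$, and the statement ``estimates must be made uniform'' is deferring precisely the part that requires a new idea. You also need, as in the paper's Case 4, to handle very small $M_k$ via the super-exponential decay coming from the Bennett bound's $(\cdot)^{n/m}$ exponent, to match the exponentially small target $(1-r_n)^k\bP(\tau_1=n)$ when $k$ is of order $n$.
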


Note that, as soon as $k \gg r_n^{-1}$, we have $\bP(\tau_k \leq n)\leq (1-r_n)^k \to 0$, and $n$ is therefore much smaller than the typical size of $\tau_k$. By \eqref{phinrn}, $k \gg r_n^{-1}$ is consistent with the hypothesis $k \ll 1/\varphi(n)$. 
Equation \eqref{locallimit} therefore includes $n$ down to a size much smaller than the typical size of $\tau_k$. 
 Heuristically, \eqref{locallimit} says that even for much smaller-than-usual $n$, when $\tau_k=n$ it is because there was a single gap of length very close to $n$, among the first $k$ gaps $\tau_j-\tau_{j-1}$; this is unique to $\ga=0$.

In comparison, in the case where \eqref{eq:alpha} holds with $\ga\in(0,1)$,  Doney \cite[Thm.~A]{cf:Doney} proved that $\bP(\tau_k=n) \sim k \bP(\tau_1=n)$ provided that $k r_n \to 0$, and Denisov, Dieker and Shneer \cite[Section 9]{cf:DDS08} proved a similar, more general, local large deviation theorem that applies in the $\ga>0$ case. If we consider the case $k r_n\to x \in (0,+\infty)$, we have that  $n/a_k\to x^{-1/\ga}$, where $a_k$ is such that $\bP(\tau_1>a_k)\sim k^{-1}$ (so that $\tau_k/a_k$ converges to an $\ga$-stable distribution with non-degenerate density $g$). Then, Gnedenko's local limit theorem (see \cite[\S~50]{cf:GK} gives that $\bP(\tau_k =n) \sim  \ga^{-1} x^{-(1+1/\ga)}g(x^{-1/\ga}) k \bP(\tau_1=n) $, in contrast with \eqref{locallimit} when $\ga=0$.

\medskip
The strong renewal theorem \eqref{strongren} from \cite{cf:Nagaev} is an easy consequence of Theorem \ref{prop:locallimit}, as follows.
Assume \eqref{eq:alpha} with $\ga=0$ and let $\theta_n$ satisfy $r_n^{-1} \ll \theta_n\ll \gp(n)^{-1}$. We write $\bP(n\in\tau)= \sum_{k=1}^n \bP(\tau_k =n)$, and decompose it according to whether $k$ is smaller or larger than $\theta_n$.
Thanks to \eqref{locallimit}, by our choice of $\theta_n$ we have 
\[
  \sum_{k\leq \theta_n} \bP(\tau_k =n) \simn \sum_{k\leq \theta_n} k\bP(\tau_1=n)(1-r_n)^k \ \simn \  r_n^{-2}\bP(\tau_1=n) \, .
\]
For the rest of the sum, we use \eqref{uniformbound} together with $\theta_n  \gg r_n^{-1}$, to get that, for $n\geq n_0$
\[\sum_{k>\theta_n} \bP(\tau_k=n)  \leq c_1 \sum_{k>\theta_n} k\bP(\tau_1=n)(1-r_n)^k =o(1) r_n^{-2}\bP(\tau_1=n) \, \quad \text{as } n\to+\infty.\]
These two estimates give \eqref{strongren}.
Combining with \eqref{eq:transient}, we obtain the following statement:
if  \eqref{eq:alpha} holds with $\ga\geq 0$, and $\bP(\tau_1>n)$ is slowly varying (that is, either $\tau$ is transient, or $\tau$ is recurrent with $\ga=0$), then \eqref{strongren} holds.

The heuristic behind \eqref{strongren} may be seen by restating it as $\bP(\tau_1=n\mid n\in\tau) \sim \bP(\tau_1>n)^2$.  This says that given $n\in\tau$, in order to have $\tau_1=n$ (i.e.~no renewals between 0 and $n$), the trajectory mainly needs to ``escape'' without renewals at each end, and these two escapes are approximately independent, each with probability near $\bP(\tau_1>n)$.  This independence in the recurrent case is unique to $\ga=0$, since in that case the only renewals that typically occur given $n\in\tau$ are very close to 0 and $n$.


\subsection{Large and moderate downward deviations}

Theorem \ref{prop:locallimit} may be viewed as both a local limit theorem and a local large deviation theorem for the case $\ga=0$, covering upward deviations (in the sense that $n$ is much larger than the typical size of $\tau_k$) and downward deviations that are not too great. 
As a complement we now consider estimates for downward deviations of the form $\bP(\tau_k \leq n)$ for $n$ much smaller than the typical size of $\tau_k$, that is $k r_n \to \infty$. 

Let $\gp^*$ denote a slowly varying function conjugate to $\gp$, that is, such that $x\mapsto x\gp^*(x)$ is an asymptotic inverse of $y\mapsto y\gp(y)$, see \cite[\S 1.5.7]{cf:BGT} for more. For most common slowly varying functions $\gp$ one has $\gp^*\sim 1/\gp$, but this is not true if $\gp$ is ``barely slowly varying,'' for example $\gp(n) = n^{1/\log\log n}$.
We will prove the following in Section \ref{largedev}.

\begin{theorem}\label{largedev}
Suppose $\tau$ is recurrent and \eqref{eq:alpha} holds with $\ga=0$. Let $n\geq k$.

(i) Given $M>0$ there exists $a_M$, with $a_M\to 1$ as $M\to 0$, such that 
if $n$ is large and $k\gp(n) \leq M$, then 
\begin{equation}
\label{extDarling}
  a_M (1-r_{n})^k \leq \bP(\tau_k \leq n) \leq (1-r_{n})^k .
\end{equation}

(ii) If $k,n \to+\infty$ with $k\gp(n) \to +\infty$ and $n/k \to + \infty$, then we have
\begin{equation}\label{phistarrate}
  \bP(\tau_k \leq n) = 
    \exp\left\{ -(1+o(1))\, k\, r\left( \frac{n}{k} \gp^*\left(\frac{n}{k}\right) \right) \right\}.
\end{equation}

(iii) For $n=b k$ with $b\geq 1$, the limit
$  -I(b) = \lim \limits_{n\to\infty} \frac{1}{n}\log \bP(\tau_k \leq bk)$
exists, and it is finite if $b\geq \min\{j:\bP(\tau_1=j)>0\}$. Moreover, it satisfies
\[
  I(b)  \sim r(b\gp^*(b)) \quad \text{as } b\to+\infty.
\]

\end{theorem}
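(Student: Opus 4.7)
The plan is to treat the three parts in order, deriving (ii) from an optimally chosen truncation scale and reducing (iii) to (ii) via subadditivity.

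For Part (i), the upper bound is immediate: $\tau_k\le n$ forces every gap $X_j:=\tau_j-\tau_{j-1}$ to be at most $n$, giving $\bP(\tau_k\le n)\le(1-r_n)^k$. For the lower bound, I would condition on $A_n:=\{\max_{j\le k}X_j\le n\}$, under which the gaps are i.i.d.\ copies of $\tau_1$ given $\tau_1\le n$. Karamata's theorem for the slowly varying $\gp$ yields $\bE[\tau_1\ind_{\tau_1\le n}]=\sum_{j=1}^n\gp(j)\sim n\gp(n)$, and then Markov's inequality gives $\bP(\tau_k>n\mid A_n)\le(1+o(1))k\gp(n)/(1-r_n)\le(1+o(1))M$. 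Hence $\bP(\tau_k\le n)\ge(1-(1+o(1))M)(1-r_n)^k$, and taking $a_M:=1-2M$ (valid for $n$ large enough depending on $M$) yields the result with $a_M\to 1$ as $M\to 0$.

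For Part (ii), define $t^*:=(n/k)\gp^*(n/k)$, so that $t^*\gp(t^*)\sim n/k$ by the defining property of $\gp^*$. The lower bound mimics Part (i) with truncation at $t_\epsilon:=\epsilon t^*$ for a small $\epsilon\in(0,1)$: slow variation of $\gp$ gives $t_\epsilon\gp(t_\epsilon)\sim\epsilon n/k$, so Markov yields $\bP(\tau_k>n\mid\max_jX_j\le t_\epsilon)\le\epsilon+o(1)$, whence
\begin{equation*}
\bP(\tau_k\le n)\ge(1-\epsilon-o(1))(1-r(t_\epsilon))^k\ge\exp\bigl(-(1+o(1))kr(t^*)\bigr),
\end{equation*}
where the $(1-\epsilon)$ prefactor is absorbed because $kr(t^*)\to\infty$ (which follows from $k\gp(n)\to\infty$ and $\gp=o(r)$ by \eqref{phinrn}), and $r(t_\epsilon)\sim r(t^*)$ by slow variation of $r$. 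For the matching upper bound I would use the Chernoff bound $\bP(\tau_k\le n)\le e^{\lambda n}\bE[e^{-\lambda\tau_1}]^k$ with $\lambda=1/t^*$, combined with the Tauberian theorem (the $\ga=0$ case of Bingham--Goldie--Teugels) giving $1-\bE[e^{-\lambda\tau_1}]\sim r(1/\lambda)=r(t^*)$ as $\lambda\to 0$. Since $\lambda n=k/\gp^*(n/k)$ and $\gp^*(n/k)^{-1}\sim\gp(t^*)=o(r(t^*))$, we obtain $\lambda n=o(kr(t^*))$, so the upper bound reads $\exp(-(1-o(1))kr(t^*))$.

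For Part (iii), existence of the limit follows from subadditivity: independence of gaps gives $\bP(\tau_{k+\ell}\le b(k+\ell))\ge\bP(\tau_k\le bk)\bP(\tau_\ell\le b\ell)$, so $f(k):=-\log\bP(\tau_k\le\lfloor bk\rfloor)$ is (essentially) subadditive, and Fekete's lemma delivers $f(k)/k\to\inf_k f(k)/k$. Finiteness when $b\ge j_0:=\min\{j:\bP(\tau_1=j)>0\}$ follows from the trivial bound $\bP(\tau_k\le bk)\ge\bP(\tau_1=j_0)^k$, while $b<j_0$ forces $\bP(\tau_k\le bk)=0$. The asymptotic $I(b)\sim r(b\gp^*(b))$ as $b\to\infty$ is read off from Part (ii) applied with $n=bk$ along any sequence $k\to\infty$ satisfying $k\gp(bk)\to\infty$, which is possible once $b$ is large enough. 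The main obstacle is the sharp upper bound in Part (ii): the Chernoff approach requires both the precise Tauberian equivalence $1-\bE[e^{-\lambda\tau_1}]\sim r(1/\lambda)$ and the crucial observation that the ``entropy'' term $\lambda n$ is negligible compared to $kr(t^*)$, which relies essentially on $\gp=o(r)$ from \eqref{phinrn}; the other steps are routine refinements of the truncation-plus-Markov idea from Part (i).
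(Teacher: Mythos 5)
Your Part (ii) is correct, and your upper bound is essentially the paper's: Chernoff at the scale $\lambda\asymp 1/t^*$ with $t^*=(n/k)\gp^*(n/k)$, combined with $1-\bE\, e^{-\lambda\tau_1}\sim r(1/\lambda)$, and the key observation that $\lambda n\sim k\gp(t^*)=o(kr(t^*))$ by \eqref{phinrn}. (The paper instead optimizes $\lambda$ exactly, solving $\nu'(\lambda_n)/(1-\nu(\lambda_n))=n/k$; since $1/\lambda_n\sim t^*$ this is the same to leading order.) For the lower bound your truncation-plus-Markov argument is a valid and somewhat more elementary alternative to the paper's exponential tilt plus Chebyshev variance bound. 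The gap is in Part (i): your choice $a_M=1-2M$ is nonpositive once $M\ge 1/2$, which makes the lower bound in \eqref{extDarling} vacuous in that range. The theorem needs a positive $a_M$ for \emph{every} $M>0$. The paper gets one for $M>1/3$ by truncating at $\gep n$ with $\gep=1/(4M)$ rather than at $n$: it bounds separately $\bP\bigl(\max_{i\le k}\hat G_i^{(n)}\le\gep n\bigr)\ge\gep^{2M}$, using $r_{\gep n}-r_n\sim\gp(n)\log(1/\gep)$, and $\bP\bigl(\sum_{i\le k}\hat G_i^{(\gep n)}\le n\bigr)\ge 1/2$, obtaining $a_M=\tfrac12(4M)^{-2M}>0$. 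Your pure-Markov argument corresponds only to the easy sub-case of small $M$, which is also how the paper handles that sub-case.

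In Part (iii), existence via subadditivity and the finiteness criterion are fine, but the asymptotics cannot be ``read off from Part (ii) with $n=bk$'': Part (ii) requires $n/k\to+\infty$, and with $n=bk$ the ratio $n/k=b$ is a fixed constant no matter how large you later take $b$. You must first identify, for each fixed $b$, the rate with the Legendre-type quantity $\sup_{\lambda>0}\{-\lambda-b^{-1}\log(1-\nu(\lambda))\}$ (Chernoff for the upper bound on $\bP$, your truncation argument at scale $b\gp^*(b)$ for the lower bound), and only then compute the $b\to\infty$ asymptotics of this supremum using $\nu(\lambda)\sim r(1/\lambda)$ and $\gp=o(r)$, which yield that the optimizing $\lambda$ satisfies $1/\lambda\sim b\gp^*(b)$ and that the linear $\lambda$ term is negligible. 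Incidentally, with the stated normalization $1/n=1/(bk)$, this computation produces $I(b)\sim r(b\gp^*(b))/b$ rather than $I(b)\sim r(b\gp^*(b))$; the latter corresponds to normalizing by $1/k$, so you should be alert to this factor of $b$ rather than reproducing it uncritically.
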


This theorem extends the result \eqref{Darling} of Darling \cite{cf:Dar} to the case $y\to+\infty$ as $n\to+\infty$. In particular, (i) allows to recover \eqref{Darling} by taking $k=y / r_n$ (since $\{\tau_k \leq n\} =\{r(\tau_k) \geq  y/k\}$), and moreover extends it to $\bP(k\, r(\tau_k) \geq y) \sim e^{-y}$ as $k\to\infty$, uniformly for $y \ll r_n /\gp(n)$ (we recall~\eqref{phinrn}).


\subsection{Reverse renewal theorems.}

Though \eqref{eq:alpha} is very natural, verifying that it holds is often difficult, for example if $\tau=\{n,\, S_n=0\}$, with $(S_n)_{n\geq 0}$ an aperiodic random walk in the domain of attraction of a symmetric stable distribution, see \cite{cf:JP}. But in that case, a local limit theorem (see \cite[\S~50]{cf:GK}) easily gives the asymptotic behavior of $\bP(S_n=0)=\bP(n\in\tau)$. Therefore, one would like to get a general result to infer from $\bP(n\in\tau)$ something about the behavior of $\bP(\tau_1=n)$.  We call such a result a \emph{reverse renewal theorem}. An additional application of such theorems is given in Section \ref{sec:applications}.

In general, it is not true that regular variation of $\bP(n\in\tau)$ implies regular variation of $\bP(\tau_1=n)$, an example being given in Section \ref{sec:example}.
But the average of the values $\bP(\tau_1=n)$ over a relatively short interval may be better behaved.  In fact we can obtain a reverse renewal theorem corresponding to \eqref{strongren} and \eqref{eq:transient} in the $\alpha=0$ case, as follows.

\smallskip
Define 
\[  
  U_n:= \sum_{k=0}^n \bP(k\in\tau), \qquad U_{\infty}:= \bE[|\tau|]=\sum_{k=0}^{\infty} \bP(k\in\tau) \quad \left(= \frac{1}{\bP(\tau_1=\infty)}\ \text{if }
   U_\infty<\infty\right),
  \]
and note that 
\begin{equation} \label{Un}
  \text{if $U_n$ is slowly varying, then } \ U_n \simn \bP(\tau_1>n)^{-1}.
  \end{equation}
This is trivial if $\tau$ is transient: $|\tau|$ is then a geometric random variable, and $U_n$ converges to $\bE[|\tau|]=\bP(\tau_1=+\infty)^{-1}$. In the recurrent case, we refer to Theorem 8.7.3 in \cite{cf:BGT}; the proof uses standard properties of convolution of Laplace transforms. Note that in the following we do not assume \eqref{eq:alpha}.

\begin{theorem}
\label{thm2}
Assume that $\bP(n\in\tau)$ is regularly varying and $U_n$ is slowly varying.
Then there exist $\gep_n\to 0$ such that 
\begin{equation}\label{summed}
  \frac{1}{ \gep_n n}  \sum_{(1-\gep_n) n < k \leq n} \bP(\tau_1=k) \simn \bP(\tau_1>n)^2 \bP(n\in\tau)  \, .
\end{equation}
If also $\bP(\tau_1=n)$ is regularly varying, then 
\begin{equation}\label{nosum}
  \bP(\tau_1=n) \simn \bP(\tau_1>n)^2 \bP(n\in\tau).
\end{equation}
\end{theorem}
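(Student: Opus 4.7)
The plan is to prove \eqref{summed} by reducing it, through the telescoping identity $\sum_{(1-\gep_n)n<k\leq n}\bP(\tau_1=k) = r_{(1-\gep_n)n}-r_n$, to the estimate $r_{(1-\gep_n)n}-r_n\simn \gep_n n\,\bP(n\in\tau)\,r_n^2$ for a suitable $\gep_n\to 0$. Using \eqref{Un}, I write $r_n=(1+\zeta_n)/U_n$ with $\zeta_n\to 0$, and set $m:=(1-\gep_n)n$; then
\[
r_m - r_n \;=\; \frac{U_n-U_m}{U_n U_m} \;+\; \frac{\zeta_m-\zeta_n}{U_m} \;+\; \zeta_n\!\left(\frac{1}{U_m}-\frac{1}{U_n}\right).
\]
Slow variation of $U_n$ gives $U_m\simn U_n$, so the first term is $\simn (U_n-U_m)/U_n^2$. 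Regular variation of $\bP(n\in\tau)$ -- necessarily of index $\leq -1$, since $U_n$ is slowly varying -- together with local uniformity on the shrinking interval $[(1-\gep_n)n,n]$ yields $U_n-U_m=\sum_{m<k\leq n}\bP(k\in\tau)\simn \gep_n n\,\bP(n\in\tau)$. Hence the first term is $\simn \gep_n n\,\bP(n\in\tau)\,r_n^2$, which is exactly the target, and the third term equals $\zeta_n$ times the first and is thus $o$ of it.

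The main technical obstacle is the second term $(\zeta_m-\zeta_n)/U_m$. Because $\zeta_n\to 0$ one has $|\zeta_m-\zeta_n|\leq 2\sup_{k\geq m}|\zeta_k|\to 0$, but making this $o$ of $(U_n-U_m)/U_n^2$ requires $\sup_{k\geq(1-\gep_n)n}|\zeta_k|$ to vanish faster than $\gep_n n\,\bP(n\in\tau)/U_n$. The clause ``there exist $\gep_n\to 0$'' in the theorem is precisely what furnishes the freedom to construct such a $\gep_n$ diagonally; the required quantitative control on $\zeta_n$ comes from the exact identity
\[
\zeta_n \;=\; -(1-r_n)\bP(n\in\tau) \;-\; \sum_{k=1}^{n-1}\bP(k\in\tau)\bigl(r_{n-k}-r_n\bigr),
\]
which is obtained by rearranging the last-renewal decomposition $\sum_{k=0}^{n-1}\bP(k\in\tau)\,r_{n-k}=1-\bP(n\in\tau)$. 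Once the error is absorbed, dividing by $\gep_n n$ yields \eqref{summed}.

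For \eqref{nosum}, if $\bP(\tau_1=n)$ is itself regularly varying then Potter's bounds ensure that the average $\frac{1}{\gep_n n}\sum_{(1-\gep_n)n<k\leq n}\bP(\tau_1=k)\simn \bP(\tau_1=n)$ for any sufficiently slowly vanishing $\gep_n\to 0$, and combining with \eqref{summed} yields \eqref{nosum}.
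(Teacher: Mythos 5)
Your setup is clean and correct: the telescoping $\sum_{(1-\gep_n)n<k\leq n}\bP(\tau_1=k)=r_{(1-\gep_n)n}-r_n$, the substitution $r_n=(1+\zeta_n)/U_n$, and the three-term decomposition of $r_m-r_n$ are all right, and the first and third terms are handled correctly. But the argument has a genuine gap at precisely the point you flag as the ``main technical obstacle.'' To absorb $(\zeta_m-\zeta_n)/U_m$ into the main term you need, as you correctly note, $\sup_{k\geq(1-\gep_n)n}|\zeta_k|=o\bigl(\gep_n\,n\bP(n\in\tau)/U_n\bigr)$. Since the left side is bounded below by a function of $n$ alone (say $g(n):=\sup_{k\geq n/2}|\zeta_k|$ once $\gep_n\leq 1/2$) while the right side is proportional to $\gep_n$, a diagonal choice of $\gep_n\to 0$ exists \emph{only if} one first proves the rate $g(n)=o\bigl(n\bP(n\in\tau)/U_n\bigr)$, i.e.\ $U_nr_n-1=o\bigl(n\bP(n\in\tau)\,r_n\bigr)$. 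This is not automatic: in the typical recurrent $\ga=0$ situation one has $n\bP(n\in\tau)\,r_n\sim \gp(n)/r_n\to 0$ by \eqref{phinrn}, so the target bound is strictly stronger than the bare convergence $\zeta_n\to 0$ furnished by \eqref{Un}. You write down the exact identity $\zeta_n=-(1-r_n)\bP(n\in\tau)-\sum_{k=1}^{n-1}\bP(k\in\tau)(r_{n-k}-r_n)$ and assert that it supplies ``the required quantitative control,'' but no estimate is extracted from it. The first term is fine (it is $\sim-\bP(n\in\tau)$, and $n r_n\to\infty$), but bounding $\sum_k\bP(k\in\tau)(r_{n-k}-r_n)$ by $o\bigl(n\bP(n\in\tau)r_n\bigr)$ is essentially the heart of the theorem and is left unaddressed. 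So the decisive step is missing, not merely compressed.

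The paper circumvents this delicate subtraction altogether. It works with generating functions: $u(s)(1-f(s))=1$ gives $f'(s)=u'(s)/u(s)^2$, Karamata's Tauberian theorem translates slow variation of $U_n$ and regular variation of $\bP(n\in\tau)$ into $u(s)\sim U((1-s)^{-1})$ and $u'(s)\sim(1-s)^{-1}\ell((1-s)^{-1})$, whence $f'(s)\sim(1-s)^{-1}\ell((1-s)^{-1})/U((1-s)^{-1})^2$, and a further Tauberian step (valid because the coefficients $k\bP(\tau_1=k)$ are nonnegative) yields $\sum_{k\leq n}k\bP(\tau_1=k)\sim n\ell_n/U_n^2$. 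The annular-sum asymptotics then follow by a diagonal choice of $\gep_n$ that only requires \emph{qualitative} facts (uniform convergence for slowly varying functions and the just-proved equivalence), and $U_n\sim\bP(\tau_1>n)^{-1}$ is invoked only at the very end. The crucial point is that the ratio $u'(s)/u(s)^2$ never asks for a rate on $U_nr_n-1$. To make your elementary route rigorous you would need to supply the rate estimate on $\zeta_n$ that the Tauberian argument renders unnecessary; as written, the proposal assumes what must be proved.

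Your treatment of \eqref{nosum} from \eqref{summed} via uniform convergence for regularly varying $\bP(\tau_1=n)$ is fine and matches the paper.
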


This theorem applies in the recurrent case when $\bP(n\in\tau)$ is regularly varying with index $-1$, and in the case of a transient renewal~$\tau$.
When $\tau$ is transient, we are able to prove the following stronger statement.

\begin{theorem}
\label{thm:transient}
If $\bP(n\in\tau)$ is regularly varying and $\tau$ is transient, then
\[\bP(\tau_1=n) \simn   \bP(\tau_1=\infty)^2 \bP(n\in\tau) \, .\]
\end{theorem}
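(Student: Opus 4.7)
The plan is to analyze the renewal equation $u_n = f_n + \sum_{j=1}^{n-1} u_j f_{n-j}$, equivalently
$$g_n := u_n - f_n = \sum_{j=1}^{n-1} u_j f_{n-j},$$
exploiting the crucial transient-case feature $\sum_{j \geq 0} u_j = 1/q < \infty$, where $q := \bP(\tau_1=\infty)$. The regular variation of $u_n = \bP(n\in\tau)$ (Potter's bound gives $u_{n-j}/u_n \leq C$ uniformly for $j \leq n/2$, and $u_{n-j}/u_n \to 1$ for each fixed $j$) combined with dominated convergence yields the preliminary convolution asymptotic
$$ \frac{1}{u_n}\sum_{j=1}^{n-1} u_j u_{n-j} \;\longrightarrow\; \frac{2(1-q)}{q}. $$

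Setting $c^* := \limsup_n f_n/u_n$ and $c_* := \liminf_n f_n/u_n$ (so $0 \leq c_* \leq c^* \leq 1$ since $f_n \leq u_n$), split $g_n/u_n$ at $j = n/2$. The half with $j \leq n/2$, bounded using $f_{n-j} \leq (c^*+\gep)u_{n-j}$ for $n-j$ large, contributes at most $(c^*+\gep)(1-q)/q$; the half with $j > n/2$, after the change $k = n-j$, uses $u_{n-k}/u_n \to 1$ to give $\to 1-q$ by dominated convergence. Symmetric lower bounds yield the linear system
$$q c_* + (1-q) c^* \geq q^2, \qquad q c^* + (1-q) c_* \leq q^2.$$
Subtracting produces $(c_*-c^*)(2q-1) \geq 0$, which combined with $c_* \leq c^*$ forces $c_* = c^* = q^2$ whenever $q > 1/2$.

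For $q \leq 1/2$ this first-order bootstrap fails to close, since its effective contraction rate $(1-q)/q$ is $\geq 1$. Here I would iterate the renewal equation to obtain
$$ f_n = u_n - V_n + \sum_{m=2}^{n-1} V_m f_{n-m}, \qquad V_n := \sum_{j=1}^{n-1} u_j u_{n-j}, $$
and rerun the bootstrap at this level, using $V_n \sim 2(1-q)u_n/q$ and $\sum_{m\geq 2} V_m = ((1-q)/q)^2$. Combined with Theorem~\ref{thm2}---applicable in the transient case since $U_n \to 1/q$ is slowly varying, and providing the averaged asymptotic $\frac{1}{\gep_n n}\sum_{(1-\gep_n)n<k\leq n}f_k \sim q^2 u_n$---this suffices to pin down $c_* = c^* = q^2$ in the remaining regime.

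The hardest step is the regime $q \leq 1/2$, where the linear constraints from the first-order bootstrap do not force $c_* = c^*$ on their own. Iterating to second order and invoking the averaged statement from Theorem~\ref{thm2} is what is needed to conclude pointwise $f_n \sim q^2 u_n$.
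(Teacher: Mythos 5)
Your approach is genuinely different from the paper's, which argues probabilistically: conditioning on $n\in\tau$, introducing $\mA_1=\{\tau\cap(0,A]=\emptyset\}$ and $\mA_2=\{\tau\cap[n-A,n)=\emptyset\}$, showing $\bP(\mA_1\cap\mA_2\mid n\in\tau)\to p_\infty^2$ via regular variation, and then bounding $\bP(\tau_1\neq n,\mA_1,\mA_2,n\in\tau)\leq c\gep\,\bP(n\in\tau)$; the essential use of transience there is that $\bP(A<\tau_1<\infty)$ can be made arbitrarily small for $A$ large. Your generating-function bootstrap handles $q:=p_\infty>1/2$ cleanly, but the treatment of $q\leq 1/2$ has a genuine gap.

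Iterating the renewal equation does not improve the threshold. Writing $\rho=(1-q)/q$, your second-order inequalities reduce to $c^*(1-\rho^2)\leq 2q-1$ and $c_*(1-\rho^2)\geq 2q-1$; since $1-\rho^2=(2q-1)/q^2$, both become tautologies at $q=1/2$ and flip direction for $q<1/2$. The same happens at every order: the $j$-fold iterate of $f=(u-1)-(u-1)f$ produces contraction factor $\rho^j$, which is $<1$ precisely when $q>1/2$. Nor does adding Theorem~\ref{thm2} close the gap: it contributes only $c_*\leq q^2\leq c^*$, and the resulting constraint set $\{qc_*+(1-q)c^*\geq q^2,\ qc^*+(1-q)c_*\leq q^2,\ c_*\leq q^2\leq c^*\}$ still admits $c_*<c^*$ when $q<1/2$ (e.g.\ $q=1/3$ allows $(c_*,c^*)=(0,1/4)$). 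Section~\ref{sec:example} of the paper shows that in the recurrent analogue $f_n/u_n$ genuinely can oscillate while the averaged statement holds, so your constraints cannot, in principle, force $c_*=c^*$ without some further input from transience beyond $\sum_k u_k=1/q$ and $\sum_k f_k=1-q$ — which is exactly what the paper's conditioning argument provides and the generating-function bookkeeping does not.
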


This theorem was proved in \cite{cf:DK} in the case where $\tau_1,\tau_2,\cdots$ are the return times to the origin of a transient aperiodic random walk,  and can be proved via Banach Algebra techniques, using \cite[Theorem 1]{cf:CNW} as suggested in \cite[Section 2]{cf:DK}. However, we give here an elementary probabilistic proof.

Section \ref{sec:transient} is devoted to the proof of Theorem \ref{thm:transient}, and Section \ref{sec:thm2} to the proof of Theorem~\ref{thm2}. Finally, in Section~\ref{sec:example}, we give an example where $\bP(\tau_1=n)$ is not regularly varying but $\bP(n\in\tau)$ is, and $U_n$ is slowly varying. This shows that  \eqref{nosum} cannot hold in the general case of a recurrent renewal, and our Theorem \ref{thm2} is in that sense optimal.

In general, Theorem \ref{thm2} reduces the problem of proving \eqref{nosum} to showing that $\bP(\tau_1=k)$ is approximately constant over the interval $((1-\gep_n)n,n]$.


\subsection{Application of reverse renewal theorems: the  intersection of two independent renewals}
\label{sec:applications}

Let $\tau$ and $\sigma$ be independent renewal processes with inter-arrival distributions satisfying
\begin{equation}
\label{alphas}
\bP(\tau_1 = n) = \gp(n) n^{-(1+\ga)} \, , \qquad \bP(\sigma_1=n) = \tilde\gp(n) n^{-(1+\tilde\ga)}
\end{equation}
for some $\ga,\tilde\ga\geq 0$ and slowly varying functions $\gp(\cdot), \tilde\gp(\cdot)$. We assume $\ga\leq \tilde\ga$.

We denote the intersection $\rho:=\tau\cap \sigma$, which is a renewal process
with renewal mass function and renewal function
\[
  \bP(n\in\rho) = \bP(n\in\tau) \bP(n\in\sigma), \quad U_n^* = \sum_{k=0}^n \bP(k\in\rho).
  \]
These are regularly varying, and their asymptotic behavior is thus known from the results for $\sigma,\tau$ in Section \ref{sec:renewals}.  In \cite{cf:ABintersect} our reverse renewal theorems, \ref{thm2} and \ref{thm:transient}, are applied to help establish the following.  If $\rho$ is transient (i.e.~$U_\infty^*<\infty$) then
\[
 \bP(\rho_1=n) \simn (U_\infty^*)^{-2} \bP(n\in\tau)\bP(n\in\sigma).
\]
If $\rho$ is recurrent and either (i) $\ga,\tilde\ga\in (0,1)$ with $\ga+\tilde\ga=1$, or (ii) $\ga=0,\tilde\ga\geq 1$, then $U_n^*$ is slowly varying, and
\begin{equation}\label{summedA}
 \bP(\rho_1=n) \simn (U_n^*)^{-2} \bP(n\in\tau)\bP(n\in\sigma) \simn \frac{\psi^*(n)}{n}
\end{equation}
for some (asymptotically known) slowly varying $\psi^*$.  In \cite{cf:ABintersect}, general $0\leq\ga\leq\tilde\ga$ are covered, and Theorems  \ref{thm2} and \ref{thm:transient} here are essential for the cases (i) and (ii).
The key step to get from \eqref{summed} for $\rho$ to \eqref{summedA} is to show that, due to the regularity \eqref{alphas} in $\sigma$ and $\tau$, $\bP(\rho_1=k)$ is approximately constant over short intervals, so that the left side of \eqref{summed} (for $\rho$) is asymptotic to $\bP(\rho_1=n)$.\\


\section{Proof of Theorem \ref{prop:locallimit}}
\label{sec:thm1}

We first prove \eqref{locallimit}, and turn to \eqref{uniformbound} as a second step. We introduce some notations: let
\[G_i:=\tau_i-\tau_{i-1} \, \quad \text{ and }\quad  M_k:= \max_{1\leq i\leq k} G_i \, .\]
We also let $\hat{G}^{(m)}_1,\dots,\hat{G}^{(m)}_k$ be i.i.d.\ with distribution $\bP(\tau_1\in\cdot\mid \tau_1\leq m)$.


\subsection{Proof of the local limit and local large deviation result \eqref{locallimit}}
The proof is divided into three steps, in which we control several contributions to $\bP(\tau_k=n)$.
\begin{itemize}
\item Step 1. Contribution of the case of only one jump larger than $(1-\gep) n$, all the other ones being (necessarily) smaller than $n/2$. This gives the right order in Theorem \ref{prop:locallimit} when $k\ll \gp(n)^{-1}$;
\item Step 2. Contribution of the case when all jumps are smaller than $n/2$: it is negligible, so there must be one jump larger than $n/2$ (and there can be only one such jump);
\item Step 3. Contribution of the case when there is one jump larger than $n/2$, but smaller than $(1-\gep) n$. This is also negligible.
\end{itemize}

\medskip
{\bf Step 1:} We show that, for any fixed $\gep>0$, and provided that $k\gp(n) \stackrel{n\to\infty}{\to} 0$,
\begin{equation}\label{onebig}
  \bP\Big(\tau_k=n, M_k > (1-\epsilon) n\Big) = (1+O(\gep))\, k\bP(\tau_1=n)(1-r_n)^k, \quad \text{as } n\to\infty.
\end{equation}

\smallskip
We have
\begin{equation}\label{bigsum2}
  \bP \Big(\tau_k=n, M_k > (1-\epsilon) n \Big) = k (1-r_n)^{k-1}\sum_{m =1}^{ \epsilon n} \bP\left(\sum_{i=1}^{k-1} \hat{G}^{(n)}_i = m\right) \bP(\tau_1=n-m).
\end{equation}
This gives the upper bound 
\begin{align}
\label{upperbd}
  \bP\left(\tau_k=n, M_k > (1-\epsilon) n\right)&\leq k (1-r_n)^{k-1} \max_{(1-\epsilon)n\leq j\leq n} \bP(\tau_1=j) \notag\\
  & \leq (1+ 2\gep) k (1-r_n)^{k} \bP(\tau_1=n), 
\end{align}
provided that $n$ is large enough.

In the other direction, \eqref{bigsum2} gives
\[ \bP \Big( \tau_k=n, M_k > (1-\epsilon) n \Big) \geq k (1-r_n)^{k-1} \bP\left(\sum_{i=1}^{k-1} \hat{G}^{(n)}_i \leq \epsilon n \right) \min_{(1-\epsilon)n\leq j\leq n} \bP(\tau_1=j) \, .\]
Then, using that $\bE[\hat{G}^{(n)}_1] = (1-r_n)^{-1} \sum_{x=1}^n \gp(x) \simn  n \gp(n)$, we have that  for $n$ large enough
\[\bP\left(\sum_{i=1}^{k-1} \hat{G}^{(n)}_i \leq \epsilon n \right)  \geq  1- \frac{\bE[\hat{G}^{(n)}_1]}{\gep n} \geq 1-\frac{2(k-1)\gp(n)}{\gep} \, .\]
Therefore, since $k\gp(n)\to 0$, we end up with
\begin{equation}
\label{lowerbd}
  \bP \Big( \tau_k=n, M_k > (1-\epsilon) n \Big) \geq (1-2\epsilon) k (1-r_{n})^{k}   \bP(\tau_1=n).
\end{equation}
provided that $n$ is large enough.

\medskip
{\bf Step 2:} We want to show that the main contribution to $\bP(\tau_k=n)$ comes when $M_k\geq  n/2$.
We prove that there exists a constant $c_2>0$ such that, if $k\gp(n)$ is small enough,
\begin{equation}\label{smallmax}
  \bP\left(\tau_k=n,M_k\leq n/2\right) \leq c_2 k^2\gp(n)\,  \bP(\tau_1=n)(1-r_n)^k \, ,
\end{equation}
which is negligible compared to \eqref{onebig} when $k\gp(n)\to 0$.
It is sufficient to show that, if $k\gp(n)$ is small enough,
\begin{equation}\label{suff}
  \bP\left(\sum_{i=1}^{k} \hat{G}^{(n)}_i = n \, ; \, \hat{G}^{(n)}_i \leq n/2 \text{ for all } i \leq k \right) \leq c_2 k^2\gp(n)\, \bP(\tau_1=n)
\end{equation}

To prove this, we rely on the following lemma,  which is a special case of the Fuk-Nagaev inequality, see \cite[Theorem 1.1]{cf:NagaevOverview}. We include a proof here since it is short and elementary for our case.

\begin{lemma}
\label{lem:nottoolarge}
Suppose \eqref{eq:alpha} holds with $\ga=0$. There exist constants $c_3,c_4>0$ such that for $n$ large, for all $1\leq m\leq n$ and $k\geq 0$,
\begin{equation}
  \bP\left( \sum_{i=1}^k \hat{G}_i^{(m)} \geq n/2 \right) \leq \left(  \frac{c_3\, k m\gp(m) }{n}\right)^{\frac{n}{2m}}  \leq \Big(c_4 k \gp(n) \Big)^{\frac{n}{2m}}\, .
\end{equation}
\end{lemma}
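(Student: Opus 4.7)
The plan is a standard truncated Chernoff/Fuk--Nagaev argument, tuned so that the exponential rate matches $n/(2m)$. For $\lambda > 0$, Markov's inequality yields
\[
  \bP\!\left(\sum_{i=1}^k \hat G_i^{(m)} \ge n/2\right) \le e^{-\lambda n/2}\, \bE\!\left[e^{\lambda \hat G_1^{(m)}}\right]^k .
\]
Since $\hat G_1^{(m)}$ takes values in $[0,m]$, convexity of $x\mapsto e^{\lambda x}$ on $[0,m]$ gives $e^{\lambda x}\le 1+(e^{\lambda m}-1)x/m$, so
\[
  \bE\!\left[e^{\lambda \hat G_1^{(m)}}\right] \le 1 + (e^{\lambda m}-1)\,\frac{\bE[\hat G_1^{(m)}]}{m}.
\]

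The next step is to control $\bE[\hat G_1^{(m)}]$. Writing $\bP(\tau_1=j)=\gp(j)/j$ and using Karamata's theorem on the slowly varying $\gp$,
\[
  \bE[\hat G_1^{(m)}]= \frac{1}{1-r_m}\sum_{j=1}^{m}\gp(j) \simn m\,\gp(m),
\]
so $\bE[\hat G_1^{(m)}]\le C\, m\gp(m)$ for $m$ large, hence $\bE[e^{\lambda \hat G_1^{(m)}}]\le \exp\bigl(C(e^{\lambda m}-1)\gp(m)\bigr)$. Plugging in,
\[
  \bP\!\left(\sum_{i=1}^k \hat G_i^{(m)} \ge n/2\right) \le \exp\!\Bigl(-\tfrac{\lambda n}{2} + Ck\gp(m)(e^{\lambda m}-1)\Bigr).
\]
Writing $\lambda = (\log u)/m$ and optimizing in $u>0$ gives the stationary point $u = n/(2Ckm\gp(m))$; substituting and using $u-1\le u$ produces
\[
  \bP\!\left(\sum_{i=1}^k \hat G_i^{(m)} \ge n/2\right) \le \left(\frac{2eC\,km\gp(m)}{n}\right)^{n/(2m)},
\]
which is the first inequality with $c_3 := 2eC$. (One should also check that the optimizing $u$ is $\ge 1$, i.e.\ $\lambda\ge 0$; otherwise $km\gp(m)\gtrsim n$ and the claimed bound is trivial since its base is $\gtrsim 1$.)

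For the second inequality, the point is that $x\mapsto x\gp(x)$ is regularly varying of index $1>0$, so there is a constant $c_5$ with $m\gp(m) \le c_5\, n\gp(n)$ for all $1\le m\le n$ with $n$ large enough: for bounded $m$ this is trivial since $n\gp(n)\to\infty$, and for large $m\le n$ it follows from Potter's bounds, $\gp(m)/\gp(n)\le A(n/m)^{\delta}$ (with $\delta<1$), which gives $m\gp(m)/(n\gp(n))\le A(m/n)^{1-\delta}\le A$. This turns the first bound into the second with $c_4 := c_3 c_5$. The only delicate point in the argument is the Chernoff optimization, but it is routine; the rest is bookkeeping with slowly varying functions.
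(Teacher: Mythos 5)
Your proof is correct and follows essentially the same route as the paper: a Chernoff/Markov exponential bound on $\sum_i \hat G_i^{(m)}$, with the choice of $\lambda$ tuned so the exponent scales like $n/(2m)$, followed by regular-variation comparison of $m\gp(m)$ with $n\gp(n)$ for the second inequality. The only cosmetic difference is the way the moment generating function is bounded: you invoke convexity of $e^{\lambda x}$ on $[0,m]$ to get $\bE[e^{\lambda \hat G_1^{(m)}}] \le 1 + (e^{\lambda m}-1)\bE[\hat G_1^{(m)}]/m$ directly, while the paper bounds each moment $\bE[(\hat G_1^{(m)})^j]\le c_5 m^j\gp(m)$ and resums the series --- the two give the same inequality. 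Your optimization of $\lambda$ and the paper's explicit choice of $\lambda$ (defined by $c_5\gp(m)(e^{m\lambda}-1)=n/(km)$) yield the same bound up to constants. Your remark that the bound is trivial when the optimizing $u<1$ (so $\lambda\le 0$), because then the base already exceeds $1$, is a genuine point that the paper glosses over, and your Potter-bound justification for $m\gp(m)\lesssim n\gp(n)$ is a cleaner version of the paper's one-line appeal to $m\gp(m)$ being asymptotically increasing.
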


\begin{proof}
The second inequality is a consequence of the fact that $m\gp(m)$ is asymptotically increasing, so we prove the first inequality.

For any $\lambda>0$ we have
\begin{equation}\label{lambdabound}
  \bP \Big( \sum_{i=1}^k \hat{G}_i^{(m)} \geq n/2 \Big) \leq e^{-\lambda n/2} \bE\left[ e^{\lambda \hat{G}_1^{(m)}}\right]^k \, .
  \end{equation}
There exists a constant $c_5$ such that for any $j\geq 1$
\begin{equation}
\label{EGm}
\bE \left[(\hat{G}_1^{(m)})^j \right]  \leq m^{j-1} \, \bE[\tau_1 \mid \tau_1\leq m ]  \leq c_5  m^{j} \gp(m)  \, .
\end{equation}
Hence
\[\bE\left[ e^{\lambda \hat{G}_1^{(m)}}\right] \leq 1+ c_5 \gp(m)\big( e^{m \lambda} -1\big) \, .\]

Now, let us define $\lambda$ by
\[
  c_5 \gp(m)\big( e^{m \lambda} -1\big) = \frac{n}{km},
\]
so that
\[
  \bE\left[ e^{\lambda \hat{G}_1^{(m)}}\right]^k \leq e^{n/m}
\]
and
\[ e^{-\lambda n/2}  = \left( 1 + \frac{n}{c_5 m k \varphi(m)} \right)^{-n/2m} \leq \left( \frac{c_5 k m\gp(m)}{n}\right)^{n/2m}\, . \]
Therefore, \eqref{lambdabound} yields
\begin{equation}\label{lambdabound2}
  \bP \Big( \sum_{i=1}^k \hat{G}_i^{(m)} \geq n/2 \Big)  \leq \left(  \frac{c_5 k m\gp(m)}{n}\right)^{n/2m} e^{n/m} \leq \Big(   \frac{c_5 e^2 k m\gp(m)}{n} \Big)^{\frac{n}{2m}} \, .
\end{equation}
\end{proof}

To control the probability on the left in \eqref{suff}, we decompose it according to the value of the largest $\hat{G}^{(n)}_i$.
Let us denote
$m_s:=2^{-s} n $ and $J_s = (m_{s+1},m_s].$  
We have
\begin{align}\label{sdecomp}
\bP \Big( \sum_{i=1}^k & \hat{G}^{(n)}_i = n \, ;\, \hat{G}^{(n)}_i \leq n/2 \text{ for all } i\leq k \Big)\notag\\
 &  = \sum_{n/k\leq m\leq n/2} k\, \bP\left( \hat{G}^{(n)}_1=m,\, \hat{G}^{(n)}_i \leq m \ \text{for all} \, 2\leq i \leq k\, , 
    \sum_{i=2}^k \hat{G}^{(n)}_i =n-m \right) \notag\\
  &\leq \sum_{1\leq s\leq \log_2 k}\ \sum_{m\in J_s}  
    k \left( \frac{1-r_{m}}{1-r_n} \right)^{k-1} 
    \bP\left( \hat{G}^{(n)}_1 =m \right) \bP\left( \sum_{i=2}^k \hat{G}_i^{(m)} =n-m \right) \notag\\
  & \leq 2k\sum_{1\leq s\leq \log_2 k}\ \sum_{m\in J_s} \frac{\varphi(m)}{m} 
    \bP\left( \sum_{i=1}^{k-1} \hat{G}_i^{(m)} =n-m \right) \notag \\
  &\leq 2c_6 k \sum_{1\leq s\leq \log_2 k} \frac{ \varphi(m_{s+1}) }{ m_{s+1} } 
    \bP\left( \sum_{i=1}^{k-1} \hat{G}_i^{(m_s)} \geq \frac{n}{2} \right),
\end{align}
where in the last inequality we used that there exists $c_6$ such that for sufficiently large $m_s$ and all $m\in J_s$, $\gp(m)\leq c_6 \, \gp(m_{s+1})$.  Since $n/k \gg n\gp(n)\to\infty$, all values $m_s$ in \eqref{sdecomp} are sufficiently large in this sense, when $n$ is large.

Since $\gp$ is slowly varying, given $a\leq 1$ we have $\gp(an)/\gp(n) \leq 1/a$ for $n$ large.  With \eqref{sdecomp} and Lemma \ref{lem:nottoolarge} this shows that
\begin{align}
\bP\left(\sum_{i=1}^{k} \hat{G}^{(n)}_i = n \, ; \, \hat{G}^{(n)}_i \leq n/2 \text{ for all } i \leq k \right) 
  &\leq 2 c_6 k \sum_{s\geq 1} \frac{2^{s+1} \gp(n)}{2^{-(s+1)} n}   \Big(  c_4\,  k \gp(n) \Big)^{\frac{n}{2m_s}}  \notag\\
& \leq 8 c_6\,  k\,  \frac{\gp(n)}{n} \sum_{s\geq 1} 4^{s}  \Big(  c_4\,  k \gp(n) \Big)^{2^{s-1}} \notag\\
&\leq  c_2k^2\gp(n) \bP(\tau_1=n) \, ,
\end{align}
where we used in the last inequality that $k\gp(n)$ is small.
Hence, \eqref{suff} is proved, and so is \eqref{smallmax}.

\medskip
{\bf Step 3:}  We show that the main contribution to $\bP(\tau_k=n)$ comes when not only $M_k\geq n/2$, but when $M_k \geq (1-\epsilon) n$: we prove that for $n$ large enough,
\begin{equation}\label{bigsum}
  \bP \Big(\tau_k=n,  n/2 < M_k \leq (1-\epsilon) n \Big) \leq  \frac{6}{\gep}k^2\gp(n) \bP(\tau_1=n)\, (1-r_n)^k.
\end{equation}
Indeed, we have that
\[
\bP \Big(\tau_k=n,\,   n/2 < M_k \leq (1-\epsilon) n \Big) \leq k\,(1-r_n)^{k-1} \max_{n/2\leq j\leq n} \bP( \tau_1 =j) \bP\left( \sum_{i=1}^{k-1} \hat{G}^{(n)}_i  \geq \gep n\right)\, .
\]
Then, we use that $\max_{n/2\leq j\leq n} \bP( \tau_1 =j) \leq 3 \bP(\tau_1=n)$ provided that $n$ is large enough, together with Markov's inequality and the fact that $\bE[\hat{G}^{(n)}_1] \leq 2 n\gp(n)$ when $n$ is large enough. This yields \eqref{bigsum}.

Combining
\eqref{upperbd}-\eqref{lowerbd} with \eqref{smallmax} and \eqref{bigsum}, since $\gep$ is arbitrary we get that, uniformly for $k$ such that $k\gp(n)\to 0$, \eqref{locallimit} holds.\qed


\subsection{Proof of the uniform bound \eqref{uniformbound}}

To prove the uniform bound, we rely on Lemma \ref{lem:nottoolarge}, and we decompose the probability according to the value of $M_k$.

Let $n\geq n_0$ and define
\[ \ell_n=\min\{\ell:2^\ell\geq n\}, \quad \ell_{n,k}:= \max\{\ell :  c_3k 2^{\ell} \gp(2^{\ell}) \leq \tfrac12 n \},
  \]
where $c_3$ is the constant from Lemma \ref{lem:nottoolarge}.

Then for some (large) $\ell_0$, there exists a constant $c_7>0$ such that for all $\ell_0<\ell\leq\ell_n-2$,
\begin{align}\label{maxbound}
  \bP\big(\tau_k = n,\, & M_k \in (2^{\ell-1},2^{\ell}] \big) \leq k\bP\left( G_1\in (2^{\ell-1},2^{\ell}], \max_{2\leq i\leq k} G_i \leq 2^\ell,\tau_k=n\right) \notag\\
  &\leq k (1-r_{2^\ell})^{k-1} \max_{m\in (2^{\ell-1},2^{\ell}]} \bP\left(\tau_1=m \right)
    \bP\left( \sum_{i=1}^{k-1} \hat{G}_i^{(2^{\ell})} \in (n-2^{\ell}, n] \right) \notag\\
  &\leq c_{7}k (1-r_{2^\ell})^{k-1} \frac{\gp(2^{\ell})}{2^{\ell}} \bP\left( \sum_{i=1}^{k-1} \hat{G}_i^{(2^{\ell})} > \frac{n}{2} \right).
\end{align}
We now have 4 cases according to the value of $\ell$.

\medskip
{\bf Case 1.} For $\ell_0\vee\ell_{n,k}<\ell\leq \ell_n-2$ we bound the last probability in \eqref{maxbound} by $1$, and observe that provided $\ell_0$ is large enough, $r_{2^\ell} - r_n \geq \tfrac12 \gp(2^\ell)$, which leads to
\begin{align}\label{bigell}
  \bP\big(\tau_k = n,2^{\ell_0\vee\ell_{n,k}} < M_k \leq 2^{\ell_n-2} \big) &\leq 2c_{7}\, k(1-r_n)^k \sum_{\ell=\ell_0\vee\ell_{n,k}+1}^{\ell_n-2} \frac{\gp(2^{\ell})}{2^{\ell}}
    \left( 1 - \frac{r_{2^\ell} - r_n}{1-r_n} \right)^k \notag\\
  &\leq 2 c_7 \, k(1-r_n)^k \sum_{\ell=\ell_0\vee\ell_{n,k}+1}^{\ell_n-2} \frac{\gp(2^{\ell})}{2^{\ell}} e^{-k\gp(2^\ell)/4} \notag\\
  & \leq 2 c_7 \, k(1-r_n)^k \frac{\gp(n)}{n} \sum_{\ell=1}^{\ell_n-2} \frac{n}{2^{\ell}}\frac{\gp(2^{\ell})}{\gp(n)} e^{-n/8 c_3 2^{\ell}}\, ,
\end{align}
where we used that $2^\ell\gp(2^{\ell})$ is asymptotically increasing in $\ell$. We obtain easily that the last sum remains bounded as $n\to\infty$.
In the end, we have a constant $c_8>0$ such that for $n\geq n_0$
\begin{equation}\label{bigell2}
  \bP\big(\tau_k = n,2^{\ell_0\vee\ell_{n,k}} < M_k \leq 2^{\ell_n-2} \big) \leq c_8 k(1-r_n)^k \bP(\tau_1=n).
\end{equation}

\medskip
{\bf Case 2.} To handle $\ell=\ell_n-1,\ell_n$ we have analogously to \eqref{maxbound}, for $n\geq n_0$
\begin{align}\label{topell}
 \bP\left( \tau_k=n, M_k>2^{\ell_n-2} \right) &\leq k(1-r_n)^{k-1} \max_{m\in (2^{\ell_n-2},2^{\ell_n}]} \bP\left(\tau_1=m \right) \notag\\
 &\leq c_9 k(1-r_n)^k \bP(\tau_1=n).
\end{align}

\medskip
{\bf Case 3.} We now deal with $\ell_0<\ell\leq \ell_{n,k}$.
We bound the last probability in \eqref{maxbound} using Lemma~\ref{lem:nottoolarge}. We obtain, analogously to \eqref{bigell}
\begin{align}
\label{smallell}
  \bP\big(\tau_k = n,2^{\ell_0} < M_k \leq 2^{\ell_{n,k}} \big) 
    &\leq 2c_{7} k \sum_{\ell=\ell_0+1}^{\ell_{n,k}\wedge\ell_n} (1-r_{2^\ell})^k \frac{\gp(2^{\ell})}{2^{\ell}}  
    \Big( \frac{c_3k 2^{\ell} \gp(2^{\ell}) }{n} \Big)^{n/2^{\ell+1}} \notag \\
  &\leq 2 c_7 k(1-r_n)^k \sum_{\ell=\ell_0+1}^{\ell_n} \frac{\gp(2^{\ell})}{2^{\ell}}  
    \Big( \frac12 \Big)^{n/2^{\ell+1}} \notag \\
  &\leq c_{10} k(1-r_n)^k\ \frac{\gp(2^{\ell_n+1})}{2^{\ell_n+1}}  \Big( \frac12 \Big)^{n/2^{\ell_n+1}} \notag\\
  &\leq c_{11} k(1-r_n)^k\frac{\gp(n)}{n} \notag\\
  &= c_{11} k(1-r_n)^k \bP(\tau_1=n).
\end{align}
Here the third inequality uses the fact that $n/2^{\ell_n+1} \geq 1/4$, and consequently the sum in the second line of \eqref{smallell} is of the same order as the $\ell=\ell_n$ term.

\medskip
{\bf Case 4.} Finally to handle $\ell\leq\ell_0$ we have, using Lemma \ref{lem:nottoolarge} and writing $m_0:=2^{\ell_0}$
\begin{align}
  \bP(\tau_k=n,M_k\leq 2^{\ell_0}) &\leq (1-r_{m_0})^k \bP\left( \sum_{i=1}^k \hat{G}_i^{(m_0)} =n \right)\notag\\
  &\leq (1-r_n)^k \left( \frac{1-r_{m_0}}{1-r_n} \right)^k \left( \min\Big\{ \frac{ c_3 m_0 \gp(m_0)k}{n} , 1 \Big\}\right)^{n/ m_0} \notag\\
  &\leq (1-r_n)^k \, e^{- c_{12} k} \left( \min\Big\{ \frac{ c_{13}k}{n} , 1 \Big\}\right)^{n/ m_0} .
\end{align}
Considering separately the cases $k\leq n/2c_{13}$ and $n/2c_{13}< k \leq n$, we conclude that there is some $c_{14}>0$ such that for $n$ large, 
\begin{equation}
\label{boundedell}
 \bP(\tau_k=n,M_k\leq 2^{\ell_0}) \leq (1-r_n)^k e^{-c_{14} n} \leq c_{15} k (1-r_n)^k\bP(\tau_1=n).
\end{equation}

\smallskip
Collecting \eqref{bigell2},\eqref{topell},\eqref{smallell} and \eqref{boundedell} concludes the proof of \eqref{uniformbound}.
\qed


\section{Large deviations: proof of Theorem \ref{largedev}}

Recall that $G_i=\tau_i-\tau_{i-1}$, and $\hat G_1^{(m)}, \hat G_2^{(m)},\ldots$ are i.i.d.\ with distribution $\bP(\tau_1\in\cdot\mid \tau_1\leq m)$.

\medskip
{\bf Proof of (i).} The second inequality is trivial, so we prove the first. Suppose $k\gp(n)\leq M$. 
Given $0<\gep<1$,
\[
  r_{\gep n} - r_{n} \sim \gp(n)\log\frac{1}{\gep} \quad \text{as } n\to +\infty,
\]
so for large $n$,
\begin{align}\label{smaller}
  \bP\left(\max_{i\leq k} \hat{G}_i^{(n)} \leq \gep n\right) = \left( 1 - \frac{ r_{\gep n} - r_{n} }{1-r_{n}} \right)^k
    \geq \exp\left( -2k\gp(n)\log\frac{1}{\gep} \right) \geq \gep^{2M}.
\end{align}
On the other hand, since $\bE[\hat G_1^{(m)}] \stackrel{m\to\infty}{\sim} m \gp(m)$, given $\gep>0$ we have for $n$ large enough
\begin{equation}\label{smallsum}
  \bP\left( \sum_{i=1}^k \hat G_i^{(\gep n)} \leq n \right) \geq 1- \frac{1}{n} \, k \bE\left( \hat G_1^{(\gep n)} \right)  \geq 1- 2 \gep k \gp(n) \geq 1-2\gep M.
\end{equation}
If $M\leq 1/3$, we apply \eqref{smallsum} with $\gep=1$:
\begin{equation}
\bP(\tau_k\leq n) \geq (1-r_{n})^k  \bP\left( \sum_{i=1}^n \hat G_i^{(n)} \leq n \right) \geq (1-r_n)^k (1-2M) \, .
\end{equation}
If $M>1/3$, we take $\gep = 1/4M$, and combining \eqref{smaller} with \eqref{smallsum}, we obtain for $n$ large enough
\[
\bP(\tau_k\leq n) \geq (1-r_{n})^k \bP\left(\max_{i\leq k} \hat{G}_i^{(n)} \leq \gep n\right) 
    \bP\left(\sum_{i=1}^k \hat G_i^{(\gep n)}  \leq n \right)  \geq  \frac12 \left( \frac{1}{4M}\right)^{2M}  (1-r_{n})^k .
\]

\smallskip

{\bf Proof of (ii)}. Define, for any $\gl>0$,
\[
  \nu(\lambda) := 1-\bE\left( e^{-\lambda \tau_1 } \right), 
\]
so $-\log(1-\nu(\cdot))$ is non-decreasing and strictly concave. Moreover, it is standard to obtain that
\begin{equation}\label{nunprops}
  \nu(\lambda)
\sim r \left( \frac{1}{\gl} \right) \to 0, \quad \text{and } \quad \nu'(\lambda) \sim   \frac{1}{\gl}\, \gp  \left( \frac{1}{\gl} \right)\to  +\infty \qquad \text{as } \gl\searrow 0.
  \end{equation}
We may view \eqref{phistarrate} as a combination of an upper and a lower bound, which we now prove.

\smallskip
\textit{Upper bound in \eqref{phistarrate}.}
Define
\[
 f_{n}(\lambda):= -n \lambda  - k \log(1-\nu(\gl));
\]
note the notation suppresses the dependence on $k$.
We will use the standard exponential bound
\begin{equation}\label{expb}
  \bP(\tau_k \leq n) = \bP\left( e^{-\lambda \tau_k } \geq e^{-\lambda n} \right) \leq e^{\lambda n} (1-\nu(\lambda))^k
    =e^{-f_{n}(\lambda)} \quad\text{for all } \lambda>0\, .
\end{equation}

Now, we define $\lambda_n>0$ by $f_n'(\lambda_n)=0$, or equivalently,
\begin{equation}\label{lambdandef}
  \frac{ \nu'(\lambda_n) }{ 1-\nu(\lambda_n) } = \frac{n}{k},
\end{equation}
so that $f_n$ achieves its (positive) supremum at $\lambda_n$.
Then $\gl_n\to 0$, since $n /k \to +\infty$.
Therefore, thanks to \eqref{nunprops}, we get that
\begin{equation}\label{nunprime2}
  \frac{n}{k} \simn \nu'(\lambda_n) \simn \frac{1}{\lambda_n} \gp\left( \frac{1}{\lambda_n} \right),
\end{equation}
which is equivalent to
\begin{equation}\label{lambdan}
 \frac{1}{\lambda_n} \simn \frac{n}{k}\gp^*\left( \frac{n}{k} \right).
\end{equation}
Then,  \eqref{nunprops} gives that $\nu(\gl_n) \simn r(1/\gl_n) \gg \gp(1/\gl_n)$, which with \eqref{nunprime2} shows that $n \lambda_n  \ll k \nu(\lambda_n)$.
In the end, we get  
\begin{equation}\label{fnsize2}
  f_n(\lambda_n) =(1+o(1)) k\nu(\lambda_n) \simn k\, r\left( \frac{1}{\lambda_n} \right)
    \simn k\, r\left( \frac{n}{k}\gp^*\left( \frac{n}{k} \right) \right).
\end{equation}
With \eqref{expb} this lets us conclude
\begin{equation}\label{upperbd1}
  \bP(\tau_k \leq n) \leq 
    \exp\left[ -(1+o(1))k\,  r\left( \frac{n}{k} \gp^*\left(\frac{n}{k}\right) \right) \right].
\end{equation}

\smallskip
\textit{Lower bound in \eqref{phistarrate}}
As is standard, we will obtain a corresponding lower bound using a tilted distribution.
Let $\gep>0$, and let $\tilde \lambda_n$ satisfy (analogously to \eqref{lambdandef})
\begin{equation}\label{lambdandef2}
  \frac{ \nu'(\tilde \lambda_n) }{ 1-\nu(\tilde \lambda_n) } = (1-\gep)\frac{n}{k}.
\end{equation}

%

Then, let $\tilde{\bP},\tilde{\bE},\mathrm{\tilde{V}ar}$ denote the probability, expectation and variance with respect to the tilted distribution of the i.i.d.\ sequence $(G_1,G_2,\dots)$ given by
\[
  \tilde{\bP}(G_1 \in \cdot) = \frac{ \bE\left( e^{-\tilde \lambda_n \tau_1} \ind_{\{\tau_1 \in \cdot \}} \right) }
    { \bE( e^{- \tilde\lambda_n \tau_1} ) }.
\]

We estimate
\begin{align}\label{usetilt}
  \bP(\tau_k \leq n)
  &\geq \frac{ \bE( e^{-\tilde \lambda_n \tau_k} ) }{ e^{-(1-2\gep) n \tilde \lambda_n } } \,
    \frac{ \bE\left( e^{-\tilde \lambda_n \tau_k} \ind_{\{\tau_k \in ((1-2\gep)n,n) \}} \right) }
    { \bE( e^{- \tilde\lambda_n \tau_k} ) } \notag\\
  &\geq \exp\bigg( (1-2\gep) n \tilde  \lambda_n  + k \log(1-\nu(\tilde\lambda_n)) \bigg) 
    \tilde{\bP} \big( \tau_k \in ((1-2\gep)n,n) \big).
\end{align}


Note that \eqref{lambdan} translates here as
\[\frac{1}{\tilde \gl_n} \simn (1-\gep)\frac{n}{k} \gp^*\left( \frac{n}{k}\right)\, ,\]
so that $\nu(\tilde\gl_n) \simn r(1/\tilde\gl_n) \simn\nu(\gl_n)$. As in \eqref{fnsize2}, we get that
\begin{equation}
 \bP(\tau_k \leq n) \geq  \exp\left[ -(1+o(1))k \,  r\left( \frac{n}{k} \gp^*\left(\frac{n}{k}\right) \right) \right] \times \tilde{\bP} \big( \tau_k \in ((1-2\gep)n,n) \big) \, ,
\end{equation}
and it only remains to show that the last probability converges to $1$ as $n\to+\infty$.

It is standard that 
\begin{equation}\label{titedmean}
  \tilde{\bE}\left( G_1 \right) = \frac{\nu'(\tilde\lambda_n)}{ 1-\nu(\tilde\lambda_n)} =  (1-\gep)\frac{n}{k},
\end{equation}
so we only need to show that $\mathrm{\tilde{V}ar} \left( G_1 \right) =o(n^2/k)$.
In fact, we have
\begin{equation}\label{tiltedvar2}
  \tilde{\bE}\left[ (G_1)^2 \right] = \frac{1}{1-\nu(\tilde \gl_n)} \sum_{j=1}^\infty j\gp(j)e^{- \tilde\lambda_n j}
    \simn \frac{1}{(\tilde \lambda_n)^2} \gp\left( \frac{1}{\tilde\lambda_n} \right) \simn \frac{1}{\tilde \gl_n}  (1-\gep) \frac{n}{k} ,
\end{equation}
where the last equivalence is a slight variant of \eqref{nunprime2}. Since $k\gp(n)\to \infty$, by a similar variant of \eqref{lambdan} we have 
\[  
  \frac{1}{\tilde \gl_n}  \gp\left( \frac{1}{\tilde \gl_n} \right) \sim (1-\gep)\frac{n}{k} \ll n\gp(n),
\]
and therefore $\tilde\gl_n^{-1} = o(n)$. With \eqref{tiltedvar2} this shows that indeed $\mathrm{\tilde{V}ar} \left( G_1 \right) =o(n^2/k)$.

\medskip
{\bf Proof of (iii)}.  The existence of $I(b)$ is standard, and its asymptotics as $b\to\infty$ simply follow from (ii).


\section{Reverse renewal theorems}

\subsection{Transient case, proof of Theorem \ref{thm:transient}}
\label{sec:transient}
Denote $p_{\infty}:=\bP(\tau_1 = +\infty) >0$.
We fix $\gep>0$, and $A$ large enough so $\bP(\tau_1>A) \in [p_{\infty}, p_{\infty}+\gep]$, and hence  $\bP(A<\tau_1<+\infty)\leq \gep$.
We then define the events
\[\mA_1=\{\tau\cap(0,A] =\emptyset\}\quad \text{ and }\quad \mA_2= \{ \tau\cap[n-A,n) =\emptyset\}.\]
We claim that if $n$ is large enough, 
\begin{gather}
(1-\gep) \bP(\tau_1\leq A)  \leq \bP(\mA_1^c | n\in\tau)\leq (1+\gep) \bP(\tau_1\leq A)\, , \notag \\
  (1-\gep) \bP(\tau_1\leq A) \leq \bP(\mA_2^c | n\in\tau)\leq (1+\gep) \bP(\tau_1\leq A)\, , \label{eq:Ac} \\
   (1-\gep) \bP(\tau_1\leq A)^2 \leq \bP(\mA_1^c \cap \mA_2^c | n\in\tau)\leq (1+\gep) \bP(\tau_1\leq A)^2 \, . \notag
\end{gather}
Indeed, we can write
\begin{equation}
\label{eq:A1A2c}
\bP(\mA_1^c \cap \mA_2^c | n\in\tau) = \sum_{i=1}^A \sum_{j=1}^A \bP(\tau_1 =i) \bP(\tau_1 =j) \frac{\bP(n- i-j \in\tau)}{ \bP(n\in\tau)} \, .
\end{equation}
Since $\bP(n\in\tau)$ is regularly varying, for large $n$, the last ratio is close to 1 uniformly in $i,j\leq A$, and the third line in \eqref{eq:Ac} follows.  The first two lines are proved similarly.

It follows from  \eqref{eq:Ac} that
\begin{align}\label{A1A2upper}
\bP(\mA_1 \cap \mA_2 | n\in\tau)&  = 1- \bP(\mA_1^c | n\in\tau) - \bP(\mA_2^c | n \in \tau ) + \bP(\mA_1^c \cap \mA_1^c  | n\in\tau) \notag \\
&\leq 1- 2 \bP(\tau_1\leq A) + \bP(\tau_1\leq A)^2 + 3\gep \notag\\
&\leq \bP(\tau_1>A)^2 + 3\gep \notag\\
&\leq (p_\infty + \gep)^2 + 3\gep\, .
\end{align}
Therefore for large $n$,
\begin{equation}
\label{tau1up}
\bP(\tau_1 =n) \leq \bP(\mA_1,\mA_2,n\in\tau) \leq \big( (p_{\infty}+\gep)^2 +3\gep\big) \, \bP(n\in\tau)\, . 
\end{equation}
Similarly to \eqref{A1A2upper}, $\bP(\mA_1 \cap \mA_2 | n\in\tau) \geq p_\infty^2 - 3\gep$ and hence
\begin{equation}\label{A1A2lower}
  \bP(\mA_1, \mA_2,n\in\tau) \geq (p_\infty^2 - 3\gep)  \bP(n\in\tau)\, . 
\end{equation}

\medskip
To turn this into a lower bound on $\bP(\tau_1=n)$, we show that conditionally on $\{\mA_1, \mA_2, n\in\tau\}$, it is very likely that $\tau_1=n$.
More precisely, we claim that there exists $c_{16}$ such that, for $n$ large,
\begin{equation} \label{claim}
  \bP(\tau_1\neq n , \mA_1, \mA_2,n\in\tau) \leq c_{16}\gep \bP(n\in\tau).
\end{equation}
With \eqref{A1A2lower}, this shows that 
\begin{equation}
\label{tau1low}
 \bP(\tau_1=n) = \bP(\tau_1= n , \mA_1, \mA_2,n\in\tau) \geq \big(p_\infty^2 - 3 \gep -  c_{16}\gep \big) \, \bP(n\in\tau) \, .
\end{equation}
Since $\gep$ is arbitrary, \eqref{tau1up} and \eqref{tau1low} complete the proof of Theorem \ref{thm:transient}.\\

\medskip
To prove \eqref{claim}, we write
\begin{align}
\label{decomp}
\bP(\tau_1\neq n , \mA_1, \mA_2,n\in\tau)
 \leq& \sum_{i=A+1}^{n/2} \bP( \tau_1 = i) \bP(n-i \in \tau) \notag \\
     & + \sum_{i=n/2+1}^{n-A-1} \sum_{ j= A+1}^{n-i} \bP(\tau_1 =i) \bP(\tau_1 =j) \bP(n-i-j \in\tau) \, .
\end{align}

For the first sum in \eqref{decomp}, since $\bP(n\in\tau)$ is regularly varying, there is a constant $c_{17}$ such that, provided that $n$ is large, $\bP(n-i\in\tau) \leq c_{17} \bP(n\in\tau)$ for every $i\leq n/2$. Hence
\begin{equation}
\label{part1}
\sum_{i=A+1}^{n/2} \bP( \tau_1 = i) \bP(n-i \in \tau) \leq c_{17} \bP(n\in\tau) \bP(A< \tau_1<+\infty) \leq c_{17}\, \gep\,  \bP(n\in\tau)\, .
\end{equation}

For the second sum in \eqref{decomp}, we use that $\bP(\tau_1 =i) \leq \bP(i\in\tau) \leq c_{17} \bP(n\in\tau)$ for $n$ large enough, since $i \in (n/2,n)$. Therefore,
\begin{align}
\label{part2}
\sum_{i=n/2+1}^{n-A-1} & \sum_{ j= A+1}^{n-i} \bP(\tau_1 =i) \bP(\tau_1 =j) \bP(n-i-j \in\tau) \notag \\
&\leq c_{17} \bP(n\in\tau) \bP(A<\tau_1 <+\infty) \times \sum_{k=0}^{+\infty} \bP(k\in\tau)
\leq \frac{c_{17}}{ p_{\infty}}\,  \gep \, \bP(n\in\tau)\, ,
\end{align}
and the proof of \eqref{claim} is complete.


\subsection{Recurrent case, proof of Theorem \ref{thm2}}
\label{sec:thm2}

The assumptions imply that the index of regular variation of $\bP(n\in\tau)$ must be $-1$.  Hence
we have $\bP(n\in\tau) = n^{-1} \ell_n$ with $\ell_n$ a slowly varying function. We can extend $U_n$ and $\ell_n$ to slowly varying functions $U(t)$ andf $\ell(t)$ defined on $[1,\infty)$

We now apply standard Tauberian arguments, in particular \cite[Corollary 1.7.3]{cf:BGT} which we use multiple times.

Set 
\[
 f(s):=\sum_{k=1}^{+\infty} s^k \bP(\tau_1=k),\quad u(s):= \sum_{k=0}^{+\infty} s^k \bP(k\in\tau), \quad |s|<1.
 \]
It is standard that, for $|s|<1$
\[u(s)(1-f(s)) = 1 \qquad \text{so }\qquad f'(s) = \frac{u'(s)}{u(s)^2} \, . \]
Since $U_n$ is slowly varying, we have $u(s) \sim U((1-s)^{-1})$ as $s\nearrow 1$. Similarly, since $u'(s)$ is the generating function of $(n+1) \bP(n+1\in\tau) \sim \ell_n$ as $n\to\infty$, we have $u'(s) \sim (1-s)^{-1}\ell((1-s)^{-1})$ as $s\uparrow 1$. We therefore conclude that
\[ f'(s) \sim \frac{1}{1-s} \frac{\ell(\tfrac{1}{1-s})}{U(\tfrac{1}{1-s})^2} \quad \text{as } s\nearrow 1 .\]
Since $\ell(t)/U(t)^2$ is slowly varying, it follows that $\sum_{k=0}^n k \bP(\tau_1=k) \sim  n \ell_n/U_n^2$ as $n\to\infty$.
This means that there is some $\epsilon_n $ decreasing to $0$ sufficiently slowly so that
\[ \sum_{k=(1-\epsilon_n) n}^{n} k \bP(\tau_1=k) \simn  n \epsilon_n \frac{\ell_n}{U_n^2}\ .\]
By \cite[Theorem 8.7.3]{cf:BGT} we have $U_n \simn \bP(\tau_1>n)^{-1}$, and \eqref{summed} follows. 
Equation \eqref{nosum} is an immediate consequence.

\subsection{Why not expect a stronger reverse renewal theorem?}
\label{sec:example}

In general, regular variation of $\bP(n\in\tau)$ (here with index of regular variation $-1$) does not imply regular variation of $\bP(\tau_1=n)$.  This shows that \eqref{nosum} cannot be true in general under the assumptions used to obtain \eqref{summed}. We give here only a description of an example, without proof details.

Let $\sigma$ be a recurrent renewal with inter-arrival distribution of form
\begin{equation}
\label{sigma}
\bP(\sigma_1 =n) = \gp(n)\, n^{-1} .
\end{equation}
Now, let $\tau_1$ be $2\sigma_1$ or $1$, with probability $1/2$ each:
\[\bP(\tau_1=1) =\tfrac12 \, , \quad \bP(\tau_1=2m) = \tfrac12 \bP(\sigma_1=m) \, , \quad \bP(\tau_1 = 2m-1) =0\, \quad \text{for } m\geq 1 .\]
Note that $r_n:=\bP(\tau_1 > n) \sim \frac12 \bP(\sigma_1 > n)$.
Then $\bP(\tau_1=n)$ is not regularly varying, but we will show that the gaps of length $1$ have a smoothing effect, and make $\bP(n\in\tau)$ regularly varying. More precisely, we claim that
\begin{equation}
\label{resulttau}
\bP(n\in\tau) \simn \frac{\gp(n)}{2r_n^2n} \simn \frac{\bP(\tau_1=2\lfloor \tfrac n2 \rfloor)}{2\bP(\tau_1>n)^2}\, ,
\end{equation}
where $\lfloor\cdot\rfloor$ denotes the integer part.


\medskip
\begin{proof}[Proof of \eqref{resulttau}.]
We choose $\theta_n,\lambda_n$ satisfying 
\[
r_n^{-1}\ll \theta_n \ll \gp(n)^{-1} \quad\text{and}\quad 1 \ll \lambda_n \ll r_n^{-1/2},
\]
and decompose $\bP(n\in\tau)$ into three sums:
\begin{equation}
\label{threesums}
\bP(n\in\tau)= \sum_{k\leq  (\gl_nr_n)^{-1} } \bP(\tau_k=n) + \sum_{ (\gl_nr_n)^{-1}< k 
  \leq \theta_n} \bP(\tau_k=n) + \sum_{k > \theta_n} \bP(\tau_k=n)\, .
\end{equation}
We will show that the main contribution comes from the middle sum, see \eqref{middlesum}, the first and last sum being negligible.

\medskip
{\bf Middle sum.} We introduce $X_k$ the number of gaps of length $1$ in the first $k$ gaps of $\tau$.
For $(\gl_nr_n)^{-1}< k\leq \theta_n$, note that $ \gl_n \leq  k \gl_n^2 r_n \ll k$ and $k\ll n$, and write
\begin{align}
\label{goodterms}
\bP(\tau_k=n) = \bP\big( X_k&- \tfrac{k}{2}\in (- k \gl_nr_n^{1/2},  k \gl_nr_n^{1/2})  \, ;\, \tau_k=n \big) \notag\\
&+ \bP\big( | X_k-\tfrac{k}{2} | \geq k \gl_nr_n^{1/2} \, ;\, \tau_k=n )\, .
\end{align}
The last probability is small. Indeed, there is a constant $c_{33}$ such that 
\[
\bP\big( |X_k- \frac{k}{2}|  \geq k \gl_nr_n^{1/2}\big) \leq e^{-c_{33} \gl_n^2r_n k}\quad \text{for all } k\geq 1;
\]
conditioning on $X_k$ we therefore get that
\begin{equation}
\bP\big( | X_k-\tfrac{k}{2} | \geq k \gl_nr_n^{1/2} \, ;\, \tau_k=n )\leq e^{-c_{33} \gl_n^2r_n k}  \sup_{1\leq j\leq k}\ \sup_{ \frac{n-k}{2}\leq m\leq \frac{n}{2}} \bP(\sigma_j=m) \, .
\end{equation}
Here the sups are over all possible values of $j=k-X_k$ and $m=(n-X_k)/2$.
Applying \eqref{uniformbound} we see that for $n$ large, for all $m\geq (n-k)/2 \geq n/4$ and $j\leq k$, we have 
\[
\bP(\sigma_j=m) \leq c_{34}k \bP(\sigma_1=n).
\] 
Since $e^{-c_{33} \gl_n^2r_n k} = o(1)\, e^{-kr_n}$ as $n\to\infty$, uniformly in middle-sum values of $k$, we get that
\begin{equation}
\label{Xkdev}
\bP\big( | X_k-\tfrac{k}{2} | \geq k \gl_nr_n^{1/2} \, ;\, \tau_k=n \big) = o(1) k e^{-k r_n} \frac{\gp(n)}{n}\quad\text{as } n\to\infty  \, ,
\end{equation}
with the $o(1)$ uniform over middle-sum values of $k$.

For the first probability on the right in \eqref{goodterms}, we use \eqref{locallimit}. Uniformly for $j$ in the interval $\tfrac{k}{2} + (- k \gl_nr_n^{1/2},  k \gl_nr_n^{1/2})$ with $j\equiv n-k\mod 2$, and for middle-sum values of $k$ (which satisfy $k \gp(n)\stackrel{n\to\infty}{\to}0$ and $k\to +\infty$),  we have $j \sim k/2$ and $k\ll n$, so
\begin{align}
\bP \Big(\sigma_j = \frac{n-k+j}{2} \Big) &= (1+o(1)) k \big( 1- 2 r_{(n-k+j)/2} \big)^j \frac{\gp\big( n/2 \big)}{n/2} \notag\\
&= (1+o(1))\, 2k e^{-k r_n(1+o(1))}\,  \frac{\gp(n)}{n},
\end{align}
since $\bP(\sigma_1 >n) \simn 2 r_n$.  Therefore, since 
\[
\bP\Big(  X_k- \tfrac{k}{2}\in (- k \gl_nr_n^{1/2},  k \gl_nr_n^{1/2}) \, ;\, X_k \equiv n \mod 2 \Big) \to \frac12\quad\text{as } n\to\infty\, ,
\]
conditioning again on $X_k$ we get that for middle-sum values of $k$,
\begin{equation}
\label{Xkgood}
\bP\Big(X_k- \tfrac{k}{2}\in (- k \gl_nr_n^{1/2},  k \gl_nr_n^{1/2}) \, ;\, \tau_k=n \Big) = (1+o(1))\, ke^{-k r_n(1+o(1))}\,  \frac{\gp(n)}{n} ,
\end{equation}
with the $o(1)$ uniform over middle-sum values of $k$.

Summing \eqref{Xkdev} and \eqref{Xkgood}, we obtain straightforwardly that
\begin{equation}
\label{middlesum}
\sum_{ (\gl_nr_n)^{-1}< k \leq \theta_n} \!\!\! \bP(\tau_k=n) = \!\!\! \sum_{ (\gl_nr_n)^{-1}< k \leq \theta_n} \!\!\! (1+o(1)) \, ke^{-k r_n(1+o(1))}\, \frac{\gp(n)}{n}   =(1+o(1))\, \frac{\gp(n)}{2 r_n^2 n} .
\end{equation}

\medskip
We are therefore left with showing that the two other sums in \eqref{threesums} are negligible.  

\smallskip
{\bf First sum.} Using \eqref{uniformbound} and \eqref{sigma} and conditioning once more on $X_k$, we get that there exists a constant $c_{35}$ such that for $n$ large enough, for any $ k\leq (\gl_nr_n)^{-1}$, 
\[\bP(\tau_k=n) \leq \sup_{1\leq j\leq k} \sup_{\frac{n-k}{2} \leq m\leq \frac{n}{2}}  \bP(\sigma_j =m) \leq  c_{35} \, k \frac{\gp(n)}{n} \, , \]
which gives 
\begin{equation}
\label{firstsum}
\sum_{k\leq  (\gl_nr_n)^{-1} } \bP(\tau_k=n) \leq \frac{c_{35}}{(\gl_nr_n)^2} \frac{\gp(n)}{n} =o(1) \frac{\gp(n)}{r_n^2 n} \, .
\end{equation}

\smallskip
{\bf Last sum.}
Similarly to \eqref{Xkdev}-\eqref{Xkgood} but using \eqref{uniformbound} in place of \eqref{locallimit}, we obtain that there exists $c_{36}$ such that for all $\theta_n< k\leq n/2$,
\[\bP(\tau_k = n) \leq c_{36} k e^{-  k r_n(1+o(1))} \frac{\gp(n)}{n} \leq k e^{-kr_n/2 } \frac{\gp(n)}{n} ,\]
the last inequality being valid for $n$ large, since $k r_n \to +\infty$.\\

For $ k \in ( n/2,n]$, we use that
\[\bP(\tau_k = n)  \leq (1-r_n)^k \leq e^{- nr_n/2 } \, .\]
Since $\theta_n \gg r_n^{-1}$, we therefore obtain that
\begin{equation}
\label{lastsum}
\sum_{k >\theta_n } \bP(\tau_k = n) \leq \sum_{k>\theta_n}  k e^{-kr_n/2} \frac{\gp(n)}{n} + n  e^{-nr_n/2 }  =o(1) \frac{\gp(n)}{r_n^2 n} \, . 
\end{equation}
This completes the proof of \eqref{resulttau}.
\end{proof}

\medskip
{\bf Acknowledgments:} The authors are grateful to V. Wachtel for bringing the work of Nagaev \cite{cf:Nagaev} to their attention.  We also thank the anonymous reviewer, which pointed out several references, and suggested a shorter proof of Theorem \ref{thm2}.

\end{document}